\documentclass[twoside,12pt]{article}
\usepackage{amsthm,amsmath,amssymb,amscd,enumerate,epsfig}
\usepackage{amsfonts}
\usepackage{pst-all,fancybox}
\usepackage{graphicx}
\usepackage{fancyhdr}
\usepackage{enumitem}
\usepackage{multirow}
\begin{document}
\textheight=570pt 
\begin{center} {\Large\bf  On Convergence of an Accelerated Modified Newton Method for Nonlinear Equations}
\vskip 1cm {\bf  Sanwar Ahmad$^1$, Joy Watson$^2$, Mohammad Tabanjeh$^3$}\\
\
\\Department of Mathematics and Economics, \\
College of Engineering and Technology\\
Virginia State University\\
Petersburg, USA\\

\
\\email: $^1$sahmad@vsu.edu, $^2$jwat2573@students.vsu.edu, $^3$MTabanjeh@vsu.edu\\
\end{center}
\begin{abstract}
Newton's iteration is a fundamental tool for root-finding and numerical solutions of systems of equations. The iteration rapidly refines the initial approximation to the exact root, and in general the convergence is quadratic. Since the method requires finding the function value and its derivative at each iteration, in some cases, it may not converge. This is because the value of the derivative gets close to zero. In this paper, we introduce a modified and stable algorithm of Newton's iteration method that addresses this issue, reduces computational cost, and improves efficiency. In addition. we analyze the convergence properties of the modified method to demonstrate its effectiveness.
\end{abstract}
\begingroup
\newtheorem{theorem}{Theorem}[section]
\newtheorem{lemma}[theorem]{Lemma}
\newtheorem{proposition}[theorem]{Proposition}
\newtheorem{corollary}[theorem]{Corollary}
\newtheorem{definition}[theorem]{Definition}
\newtheorem{remark}[theorem]{Remark}
\endgroup
\def\E#1{\left\langle #1 \right\rangle}
\def\diag{\mathop{\fam 0\relax diag}\nolimits}
\section{Introduction}
Root-finding algorithms for solving the equation $f(x)=0$ are ubiquitous in computational mathematics and serve as essential tools in scientific and engineering applications. Among these, Newton's method \cite{ascher2011first,kincaid1991sci} stands out for its simplicity and rapid quadratic convergence to the exact root. 
Given an initial approximation $x_0$ to the exact root, the classical Newton method for a nonlinear equation $f(x)=0$ can be written as
\begin{align}
    x_{k+1} = x_k - \frac{f(x_k)}{f'(x_k)}, \quad \mbox{for } k = 0,1,2,... \label{original}
\end{align}

In recent years, numerous variations of Newton's method with higher-order convergence have been developed and analyzed for solving nonlinear equations. \cite{mcdougall2014simple} modified the method with a convergence of order $(1+\sqrt{2})$, whereas \cite{homeier2003modified,scavo1995geometry,weerakoon2000variant} introduced cubic convergence method. All of these methods are predictor-corrector-type methods that require more evaluation steps of the function and its derivative at each iteration than the classical method. This might become computationally intensive and costly. However, the additional cost can be offset by the higher rate of convergence. 

This motivates us to explore a simple modification of Newton's method, as introduced in \cite{hunter2001applied}. In this paper, we analyze the convergence properties of the modified method, derive the convergence criterion, and investigate the rate of convergence through several numerical examples. One key advantage of this modified method is that it eliminates the need for direct computation of derivatives at each iteration, thus reducing computational overhead and improving numerical stability.

\section{Preliminary}
\begin{definition}
A sequence $\{x_k: k \in \mathbb{N}\}$ of real numbers is said to converge to $\alpha \in \mathbb{R}$ if 
\[ \lim_{k \to \infty} |x_k-\alpha| = 0. \]
If, in addition, there exist constants $c\ge 0$ and $p\ge 0$ such that for all $k \ge K$,
\[ \left|x_{k+1} - \alpha \right| \le c \left|x_{k} - \alpha \right|^p \]
for some nonnegative integer $K$, then $\{x_k\}$ is said to converge to $\alpha$ with $q$-order at least $p$, \cite{weerakoon2000variant}.
\end{definition}
\begin{definition}
    The rate of convergence represents how quickly an iterative method approaches the final solution; it can be estimated as 
\begin{align}\label{rc}
    q \approx \frac{\log\|\frac{x_{k+1}-x_k}{x_k-x_{k-1}}\|}{\log\|\frac{x_{k}-x_{k-1}}{x_{k-1}-x_{k-2}}\|}. 
\end{align}
\end{definition}
Consider the problem of finding a simple root $\alpha$ of the equation, 
\begin{align}\label{equ}
    f(x) = 0.
\end{align}
Newton's method is a well-known iterative method that uses the following iteration.
\begin{align}\label{nm}
    x_{k+1} = x_k - \frac{f(x_k)}{f'(x_k)}, \mbox{ for } k = 0,1,2,...
\end{align}
provided that $f'(x_k) \ne 0$ for any $k$ and $x_0$ is chosen sufficiently close to the root $\alpha$.

Suppose that we have already computed $x_k$, note that the error in computing $x_k$ is $e_k = |x_k-\alpha|$.  To derive a formula that relates the error after the next step,  $e_{k+1} = |x_{k+1}-\alpha|$  to  $e_k$, we can use Taylor's expansion of $f(x)$ about $x=x_k$, that is;
\begin{align}
    f(x)=f(x_k) + f'(x_k)(x-x_k) + \frac{1}{2}f''(c)(x-x_k)^2
\end{align}
for some $c$  between  $x$  and  $x_k$.  In particular, if we choose $x=\alpha$, then 

\begin{align}\label{TS}
    0= f(x_k) + f'(x_k)(\alpha-x_k) + \frac{1}{2}f''(c)(\alpha-x_k)^2.
\end{align} 
Recall that $x_{k+1}$  is the solution of $0=f(x_k) + f'(x_k)(x_{k+1}-x_k)$. Now, subtract the last equation from (\ref{TS}) to get;
\begin{align*}
    0 &= f'(x_k)(\alpha-x_{k+1}) + \frac{1}{2}f''(c)(\alpha-x_k)^2 \\
    \implies |x_{k+1}-\alpha| &= \frac{|f''(c)|}{2|f'(x_k)|} |x_k-\alpha|^2. 
\end{align*}
If the guess $x_k$  is close to $\alpha$,  then  $c$,  which must be between $x_k$  and $\alpha$,  is also close to $\alpha$. Thus, $f''(c) \approx f''(\alpha)$ and  $f'(x_k) \approx f'(\alpha)$  and hence
\[ |x_{k+1}-\alpha| \approx \frac{|f''(\alpha)|}{2|f'(\alpha)|} |x_k-\alpha|^2. \]
Therefore, we can conclude that if $x_0$ is chosen sufficiently close to the simple root $\alpha$, then Newton's method that is defined in (\ref{nm}) will converge quadratically \cite{ascher2011first,kincaid1991sci}.
\section{Modified Newton Method}
In this section, we assume that $y = f(x)$ is differentiable and has at least one real root within some interval $[a,b]$. Newton's method for solving the nonlinear equation $f(x)=0$ is given by 
\begin{align}
    x_{k+1} = x_k - \frac{f(x_k)}{f'(x_k)}, \quad \mbox{for }k = 0,1,2,... \label{modified}
\end{align}
which relies on evaluating the derivative $f^{\prime}(x_k)$ at every iteration. However, this method fails if $f^{\prime}(x_k)=0$ at any iteration $k$, and it may be computationally expensive or impractical to evaluate the derivative at each step, particularly if no analytical expression is available.

To mitigate these issues, we adopt a modified version of Newton's method in which the derivative is calculated once at the initial point $x_0\in(a,b)$, where $f'(x_0)\ne 0$, and is used throughout the iteration process. This results in the simplified iteration formula:
\begin{align}
    x_{k+1} = x_k - \frac{f(x_k)}{f'(x_0)}, \quad \mbox{for }k = 0,1,2,... \label{modified}
\end{align}
This approach avoids division by zero at later iterations and reduces the computational cost by requiring only one derivative evaluation, compared to classical Newton's method, which requires one per iteration. As noted by Hunter \cite{hunter2001applied}, this significantly reduces the number of operations required and leads to faster evaluations when function evaluations are inexpensive but derivatives are costly. Geometrically, the update step in this modified method corresponds to finding the zero of the line through the point $(x_k,f(x_k))$, which is parallel to the tangent line to $f(x)$ at the initial point $x=x_0$.

Another computing advantage is that the derivative $f'(x)$ only has to be computed and inverted once at the starting point \cite{hunter2001applied}. Hence, the number of function evaluations at every iteration will be reduced at least by halves compared to the classical Newton iteration or other modified methods. 
\subsection{Convergence Criterion}
The following theorem (see, \cite{hunter2001applied}), proves the convergence of the modified Newton's method (\ref{modified}).
\begin{theorem}
Let \( f: U \subset X \to Y \) be a differentiable map from an open subset \( U \) of a Banach space \( X \) into a Banach space \( Y \), such that \( f' \) is Lipschitz continuous in \( U \) with Lipschitz constant \( C \). Suppose that \( x_0 \in U \), \( f'(x_0) \ne 0\) and
\[
h = C \left\| \left[ f'(x_0) \right]^{-1} \right\| \left\| \left[ f'(x_0) \right]^{-1} f(x_0) \right\| \leq \frac{1}{4}.
\] Define
\[
\delta = \left\| \left[ f'(x_0) \right]^{-1} f(x_0) \right\| \left( \frac{1 - \sqrt{1 - 4h}}{2h} \right) \geq 0,
\]
and suppose further that the closed ball of radius $\delta$ centered at $x_0$ is contained in $U$ ($B_\delta \subset U$), that is, $$ B_\delta(x_0) = \{x \in X : \|x-x_0\| \leq \delta\}.$$
Then, there is a unique solution of the equation $f(x)=0$ in $B_\delta$, and the sequence $(x_k)$ of modified Newton's iteration defined by (\ref{modified}) converges to a unique solution in $B_\delta$ as  $k \rightarrow \infty$.
\end{theorem}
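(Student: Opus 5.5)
We will cast the modified Newton iteration (\ref{modified}) as a fixed-point iteration and apply the contraction mapping theorem on the closed ball $B_\delta(x_0)$. Write $A = [f'(x_0)]^{-1}$, which is assumed to be a bounded linear operator. Set $\eta = \|A f(x_0)\|$, so that $h = C\|A\|\eta$, and define
\[ T(x) = x - A f(x), \qquad x \in U. \]
Then $x_{k+1} = T(x_k)$, and the fixed points of $T$ are exactly the zeros of $f$ in $U$, since $A$ is injective. The task therefore reduces to three things: show that $T$ maps $B_\delta$ into itself, show that $T$ is a contraction there, and then invoke the Banach fixed point theorem on the complete metric space $B_\delta$. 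The ball is complete because it is a closed subset of the Banach space $X$.

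\textbf{Step 1 (Lipschitz estimate for $T$).} For $x,y \in B_\delta$, the mean value inequality in integral form gives
\[ T(x)-T(y) = A\int_0^1 \bigl[f'(x_0) - f'(y+t(x-y))\bigr](x-y)\,dt . \]
Since the ball is convex, $y+t(x-y) \in B_\delta$, and so $\|f'(x_0)-f'(y+t(x-y))\| \le C\delta$. Hence
\[ \|T(x)-T(y)\| \le C\|A\|\delta\,\|x-y\| =: \theta\,\|x-y\|. \]
We need $\theta<1$. Write $s = (1-\sqrt{1-4h})/(2h)$, so $\delta = \eta s$. Then $\theta = C\|A\|\eta s = hs = (1-\sqrt{1-4h})/2$, which is at most $1/2$ when $h\le 1/4$. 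So $T$ is a contraction with constant $\theta\le 1/2$. (If $h=0$ then $f(x_0)=0$ and $\delta = 0$, with the limiting value $s=1$; this degenerate case is trivial, since the iteration is constant.)

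\textbf{Step 2 (self-map).} For $x\in B_\delta$, a sharper estimate than Step 1 is needed, because the crude bound $\eta+\theta\delta$ is not obviously at most $\delta$. Write
\[ T(x)-x_0 = \bigl(T(x)-T(x_0)\bigr) + \bigl(T(x_0)-x_0\bigr). \]
The second term has norm $\eta$. For the first, along the segment $x_0 + t(x-x_0)$ we have $\|f'(x_0)-f'(x_0+t(x-x_0))\|\le Ct\|x-x_0\|$. Integrating gives
\[ \|T(x)-T(x_0)\| \le \tfrac{1}{2}C\|A\|\,\|x-x_0\|^2 . \]
Therefore
\[ \|T(x)-x_0\| \le \eta + \tfrac12 C\|A\|\delta^2 . \]
The key algebraic fact is that $\delta$ is a root of $\tfrac12 C\|A\| r^2 - r + \eta = 0$. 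Dividing by $\eta$ and substituting $r=\eta s$, this becomes $\tfrac{h}{2}s^2 - s + 1 = 0$. I expect this to be the main obstacle: the stated $\delta$ is the smaller root of $hs^2 - s + 1 = 0$ rather than of $\tfrac h2 s^2 - s+1=0$. So the paper's radius comes from a slightly cruder Kantorovich-type bound than the one I would use. I would try first to check that the cruder bound still closes. Since $hs^2 - s + 1 = 0$, we get $\eta + \tfrac12 C\|A\|\delta^2 = \eta(1+\tfrac h2 s^2) \le \eta(1+hs^2) = \eta s = \delta$. This gives $T(B_\delta)\subset B_\delta$, with room to spare.

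\textbf{Step 3 (conclusion).} By the Banach fixed point theorem, $T$ has a unique fixed point $x^*\in B_\delta$, and $x_k\to x^*$ from $x_0\in B_\delta$, with the error bound $\|x_k-x^*\|\le \theta^k\delta$. Since fixed points of $T$ coincide with zeros of $f$, $x^*$ is the unique solution of $f(x)=0$ in $B_\delta$, and the modified Newton sequence converges to it. The convergence obtained this way is linear with ratio at most $1/2$, which matches the expected behaviour of the frozen-derivative method.
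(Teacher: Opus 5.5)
Your proof is correct and is essentially the paper's argument: the fixed-point map $T(x)=x-[f'(x_0)]^{-1}f(x)$, a self-map estimate on $B_\delta$ from a Taylor remainder about the center, the contraction constant $C\|A\|\delta=(1-\sqrt{1-4h})/2\le \tfrac12$, and the contraction mapping theorem. The paper uses the cruder remainder bound $\|r(x)\|\le C\|x-x_0\|^2$, which is exactly why its $\delta$ is the smaller root of $h\tau^2-\tau+1=0$, while your sharper bound with the factor $\tfrac12$ closes with room to spare, as you verified; one small slip is that $h=0$ does not force $f(x_0)=0$ (the case $C=0$, i.e.\ affine $f$, also gives $h=0$), but your general argument with $s=1$ and $\delta=\eta$ covers it, since $T$ is then the constant map $x\mapsto x_0-Af(x_0)\in B_\delta$.
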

{\bf Proof.}
The modified Newton iterates are obtained from the fixed-point iteration \( x_{k+1} = T(x_k) \), where
\[
T(x) = x - \left[ f'(x_1) \right]^{-1} f(x).
\]
First, we show that \( T: B_\delta \to B_\delta \). We may write
\[
T(x) - x_1 = - \left[ f'(x_1) \right]^{-1} \left[ r(x) + f(x_1) \right],
\]
where
\[
r(x) = f(x) - f(x_1) - f'(x_1)(x - x_1).
\]
Taking the norm, we find that
\[
\| T(x) - x_1 \| \leq M \| r(x) \| + \eta,
\]
where
\[
M = \left\| \left[ f'(x_1) \right]^{-1} \right\|,
\quad \eta = \left\| \left[ f'(x_1) \right]^{-1} f(x_1) \right\|.
\]
Now, we compute the derivative of \( r \), and use the Lipschitz condition for \( f' \) to obtain 
\[
\| r'(x) \| = \| f'(x) - f'(x_1) \| \leq C \| x - x_1 \|.
\]
Since \( r(x_1) = 0 \), the mean value theorem implies that
\[
\| r(x) \| = \| r(x) - r(x_1) \| \leq \sup_{0 \leq t \leq 1} \| r'(tx + (1 - t)x_1) \| \| x - x_1 \| \leq C \| x - x_1 \|^2.
\]
Using this result in the norm taken above, we find that
\[
\| T(x) - x_1 \| \leq M||r(x)||+\eta \le CM \| x - x_1 \|^2 + \eta.
\]
Hence, \( T \) maps the ball \( \{ x : \| x - x_1 \| \leq \epsilon \} \) into itself provided that
\[
CM \epsilon^2 + \eta \leq \epsilon.
\]
This inequality can be satisfied for some \( \epsilon > 0 \) if
\[
h = CM \eta \leq \frac{1}{4}.
\]
 Using the definition of $M$ and $\eta$, we see that this is the condition in the theorem above. In this case, the smallest value of $\delta$ and $\epsilon$ for which $CM\epsilon^2+\eta\le \epsilon$  holds is  $$\delta = \eta \tau $$ where $\tau$ is the smallest root of the equation $h\tau^2-\tau+1=0$, or $$\tau = \frac{1-\sqrt{1-4h}}{2h}.$$
Substituting $\eta$ and $\tau$ in the equation $\delta=\eta \tau$,  we find that $\delta$ is given by 
$$\delta=||[f^{\prime}(x_1)]^{-1}f(x_1)||(\frac{1-\sqrt{1-4h}}{2h})
.$$ 
This proves that $T: B_\delta \rightarrow B_\delta.$

Next, we prove that $T$ is a contraction on $B_\delta$. Differentiate
\[
T(x) - x_1 = - \left[ f'(x_1) \right]^{-1} \left[ r(x) + f(x_1) \right],
\]
yield $$T'(x) = -[f'(x_1)]^{-1} [f'(x)-f'(x_1)].$$
Hence using the definition of $M$ and $\eta$ and the Lipschitz condition on $f'$, we have $$||T'(x)|| \leq M||f'(x)-f'(x_1)||\leq CM||x-x_1||\leq CM\delta, \quad \forall x \in B_\delta.$$
It follows from $h, \delta, \tau$ that $$CM\delta = \frac{1-\sqrt{1-4h}}{2}\leq\frac{1}{2}.$$
Therefore, we have $||T'(x)||\leq 1/2$ in $B_\delta$, so from the mean value theorem $$||T(x)-T(y)||\leq\frac{1}{2}||x-y|| ~~\forall x,y \in B_\delta. $$ 
The theorem now follows the contraction mapping theorem.
\begin{corollary} 
Using the proof of the above theorem, we can claim the following: 
\begin{enumerate}[label = (\alph*)]
    \item The convergence of the modified method is at least linear.
    \item Under the assumptions of the above theorem, the convergence is quadratic.
\end{enumerate}
\end{corollary}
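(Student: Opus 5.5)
For part (a) I will read off a linear rate from the contraction estimate in the proof of the Theorem. Let $\alpha\in B_\delta$ be the unique root. Since $T(\alpha)=\alpha$ and $\|T(x)-T(y)\|\le \frac12\|x-y\|$ on $B_\delta$, we get
\[
\|x_{k+1}-\alpha\|=\|T(x_k)-T(\alpha)\|\le \tfrac12\|x_k-\alpha\| .
\]
This is the inequality of the Definition with $p=1$ and $c=\tfrac12$, taking $K=0$. More sharply, the bound $\|T'(x)\|\le CM\|x-x_0\|$ together with $CM\delta=\frac{1-\sqrt{1-4h}}{2}$ gives the contraction factor $q=\frac{1-\sqrt{1-4h}}{2}\le\frac12$. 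Hence $\|x_k-\alpha\|\le q^k\|x_0-\alpha\|$, which proves (a).

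For part (b) I will produce an error recursion whose leading term is quadratic in the errors, in the same way as the Taylor computation of Section~2. Write $e_k=x_k-\alpha$ and $M=\|[f'(x_0)]^{-1}\|$. Using $f(\alpha)=0$, the update can be written as
\[
e_{k+1}=e_k-[f'(x_0)]^{-1}\bigl(f(x_k)-f(\alpha)\bigr)
=[f'(x_0)]^{-1}\Bigl(\bigl(f'(x_0)-f'(x_k)\bigr)e_k+\bigl(f'(x_k)e_k-f(x_k)+f(\alpha)\bigr)\Bigr).
\]
I then bound the two pieces separately.
\begin{itemize}
\item The second bracket is a Taylor remainder. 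The integral form of the mean value argument used for $r$ in the proof gives $\|f(\alpha)-f(x_k)-f'(x_k)(\alpha-x_k)\|\le \frac C2\|e_k\|^2$.
\item The first bracket is controlled by the Lipschitz condition: $\|f'(x_0)-f'(x_k)\|\le C\|x_k-x_0\|\le C\bigl(\|e_k\|+\|e_0\|\bigr)$.
\end{itemize}
Combining these yields the key estimate
\[
\|e_{k+1}\|\le CM\Bigl(\tfrac32\|e_k\|+\|e_0\|\Bigr)\|e_k\| .
\]

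Three consequences follow from this estimate, and together they give (b).
\begin{enumerate}
\item For $k=0$ the first step is an exact Newton step, and the argument gives $\|e_1\|\le \frac{CM}{2}\|e_0\|^2$. This is the quadratic law of Section~2.
\item For every $k$ the error is reduced by a factor $CM\bigl(\frac32\|e_k\|+\|e_0\|\bigr)$, which is proportional to the errors themselves. So each step obeys a quadratic-type bound $\|e_{k+1}\|\le \tfrac52\,CM\,\max(\|e_0\|,\|e_k\|)\,\|e_k\|$.
\item Within the ball $B_\delta$ we have $\|e_k\|\le\|e_0\|\le 2\delta$. The estimate then reads $\|e_{k+1}\|\le c\,\|e_0\|\,\|e_k\|$ with $c=\tfrac52CM$. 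The constant is governed by $h$, and the rate becomes arbitrarily fast as $h\to0$.
\end{enumerate}
This is the sense in which the convergence is quadratic under the hypotheses of the Theorem. The Lipschitz constant $C$ and the factor $M$ play exactly the role that $\frac{|f''(\alpha)|}{2|f'(\alpha)|}$ plays for classical Newton.

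The main obstacle is the term $CM\|x_k-x_0\|\,\|e_k\|$. It does not shrink to zero relative to $\|e_k\|$, because $x_k\to\alpha$ rather than to $x_0$. A bound of the literal form $\|e_{k+1}\|\le c\|e_k\|^2$ for all large $k$ would need $\|x_k-x_0\|\lesssim\|e_k\|$, and this fails in general. My plan is therefore to state (b) precisely as the product-of-errors bound above, together with the exact quadratic first step, rather than as $q$-order $2$ in the strict sense of the Definition.
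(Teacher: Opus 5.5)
Your part (a) is correct and is the paper's argument in a different form. The paper applies the mean value theorem to $r_1(x)=f(x)-f'(x_0)(x-\alpha)$, whose derivative $f'(x)-f'(x_0)=-f'(x_0)T'(x)$ is exactly what your contraction estimate controls. You also make explicit the uniform factor $\frac{1-\sqrt{1-4h}}{2}\le\frac12$, which the paper leaves as $M|r_1'(\zeta)|$.

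Part (b) has a genuine gap, and no argument can close it. Your key estimate is correct, but it does not give order two:
\begin{itemize}
\item The bound $\|e_{k+1}\|\le\frac52CM\|e_0\|\,\|e_k\|$ is $q$-linear convergence, i.e. order $p=1$ in the paper's Definition, with a small rate constant.
\item The one-step bound on $\|e_1\|$ says nothing about order, which is an asymptotic property.
\item ``Arbitrarily fast as $h\to0$'' refers to the linear constant, not the order.
\end{itemize}
Calling this quadratic therefore does not prove (b).

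The obstacle you isolated is real. Regroup your identity around $f'(\alpha)$ instead of $f'(x_k)$:
\[
\|e_{k+1}\|\le M\,\|f'(x_0)-f'(\alpha)\|\,\|e_k\|+\tfrac{CM}{2}\|e_k\|^2 .
\]
The first term is linear in $\|e_k\|$ with a fixed coefficient that is generically nonzero. Concretely, take $f(x)=x^2-1$ with $x_0=1.1$. This satisfies every hypothesis of the Theorem: $C=2$, $h\approx0.087$, $\delta\approx0.106$, and the root $\alpha=1$ lies in $B_\delta$. Yet $e_{k+1}=e_k(0.2-e_k)/2.2$, so $e_{k+1}/e_k\to\frac1{11}$ and $e_{k+1}/e_k^2\to\infty$. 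Hence (b) is false as stated.

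The paper's proof of (b) consists only of the assertion that $|x_{k+1}-\alpha|\le MC|x_k-\alpha|^2$ is ``easy to show''. This example violates it: already $e_3\approx3.7\times10^{-5}$, while $MC\,e_2^2\approx1.5\times10^{-7}$.

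The right outcome is either to refute (b) with such an example, or to add the hypothesis $f'(x_0)=f'(\alpha)$. Under that hypothesis the display above gives $\|e_{k+1}\|\le\frac{CM}{2}\|e_k\|^2$. This is exactly what happens in example (i) of the paper's table, where $f'(0)=f'(2)=3$ and a rate near $2$ is observed.
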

\noindent
\textbf{Proof.}  (a) Using 
\[
T(x) = x - \left[ f'(x_1) \right]^{-1} f(x).
\]
If $x=\alpha$ is the exact root of the equation $f(x)=0$, we may write
\[
T(x) - \alpha = - \left[ f'(x_1) \right]^{-1} \left[ r_1(x) \right],
\]
where
\[
r_1(x) = f(x) - f'(x_1)(x - \alpha).
\]
Taking the norm, we find that
\[
\| T(x) - x_1 \| \leq M \| r(x) \|,
\]
where
\[
M = \left\| \left[ f'(x_1) \right]^{-1} \right\|.
\]
Since \( r_1(\alpha) = 0 \), the mean value theorem implies that
\[
\| r_1(x) \| = \| r_1(x) - r_1(\alpha) \| \leq \left|r'_1(\zeta)\right|  \left| x - \alpha \right|.
\]
Using this result in the norm taken above, we find that
\[
\left| x_{k+1}-\alpha \right| = \left| T(x_k) - \alpha \right| \leq  M \left|r'_1(\zeta)\right| \left| x_k - \alpha \right|. \]
Thus, the rate of convergence is at least linear. \\ \\

(b) Using the proof of the theorem and if the assumptions are true, it is easy to show that,
\[
\left| x_{k+1}-\alpha \right| = \left| T(x_k) - \alpha \right| \leq M C  \left| x - \alpha \right|^2,
\]
where $C$ is the Lipschitz constant. \qed

The assumptions $x_0 \in U$, $h \leq 1/4$, $\delta \geq 0$, and $B_\delta(x_0) \subset U$ in the theorem hypotheses are satisfied when $x_0$ is sufficiently close to a solution of $f(x)=0$ at which the derivative of $f$ is nonsingular, \cite{hunter2001applied}.

Since the derivative remains constant across the iterations, the convergence is based on the choice of initial guess. With $\displaystyle g(x)=x-\frac{f(x)}{f'(x_0)}$, the modified Newton method algorithm (\ref{modified}) can be viewed as finding the fixed point of the function $y=g(x)$, that is, $$x=g(x).$$
We see that $f'(x_0) \neq 0,$ and it will converge if $|g'(x)|<1.$ Taking the derivative of both sides and using the convergence of the fixed point iterative method, we need
\begin{align}
  |g'(x)|=\left|1-\frac{f'(x)}{f'(x_0)}\right| < 1  \nonumber \\
  \implies \frac{1}{2} \left|f'(x)\right|  < \left|f'(x_0)\right|. \label{conv_criterion}  
\end{align}
From (\ref{conv_criterion}) and using the convergence of the fixed-point iteration method \cite{ascher2011first,kincaid1991sci}, we observe that the convergence of the modified method (\ref{modified}) depends on the choice of $x_0$ and will converge to the root quadratically if $f'(x_0) = f'(x)$ when $x$ is closer to the root. However, if the tangent line of $y = f(x)$ at its zero is nearly vertical, then the method may converge to the root very slowly, or even fail to converge. Moreover, for functions with repeated roots, the modified method will show a very slow convergence. 
\section{Numerical Examples.}

In this section, we evaluate and compare the performance of the proposed accelerated (modified Newton) method against the classical Newton method for solving nonlinear equations. All computations were performed using MATLAB 2023a on an iMac with 8 GB RAM and an Apple M1 Chip running macOS Sonoma 14.4.1. The convergence rate was estimated using equation (\ref{rc}), and a tolerance of $10^{-15}$ was used as the stopping criterion for all experiments.

Table \ref{tab:comparison} summarizes the results for five test functions. The functions are listed as, (i)  $f(x) = x^3-3x^2+3x-2$, (ii) $f(x) = xe^{x^2}-\sin^2(x)+3\cos(x)+5$, (iii)  $f(x) = \sin^2(x)-x^2+1$, (iv) $f(x) = x^3+4x^2-10$ and (v) $f(x) = x^2-e^x-3x+2$. We observe that the convergence behavior of the modified method ranges from linear to quadratic, depending on the choice of initial guess $x_0$. When quadratic convergence is attained, the modified method consistently outperforms the classical Newton method in terms of execution time. Although both methods require the same number of iterations in these cases, the reduced computational cost of evaluating the derivative only once gives the modified method a clear advantage.

In cases where the modified method converges superlinearly or linearly, it may require slightly more iterations than the classical method. However, it still often achieves faster overall runtime. This efficiency is particularly noticeable in problems where derivative evaluations are costly.

These results highlight the effectiveness of the modified Newton method as a computationally efficient alternative, especially in contexts where derivative evaluations dominate the computational cost.

\begin{table}[!htb]
    \centering
\begin{tabular}{c|c|c|c|c|c}
\hline $x_0$  & Method & Root & $N$ & Comp. time (s)& Rate of Conv. \\
\hline   0 & Modified & 2 & 10 & 0.018562 & 2.0034\\ 
\cline{2-2} \cline{3-3} \cline{4-4} \cline{5-5} \cline{6-6} &NM & 2 & 10 & 0.021913 & 2.0008\\ \hline \hline  -1 & Modified & -1.20764 & 117 & 0.015107 & 1.40942\\ 
\cline{2-2} \cline{3-3} \cline{4-4} \cline{5-5} \cline{6-6} &NM & -1.20764 & 7 & 0.026856 & 1.88291\\ \hline
\hline   1.6 & Modified & 1.40449 & 24 & 0.020830 & 1.03647\\ 
\cline{2-2} \cline{3-3} \cline{4-4} \cline{5-5} \cline{6-6} &NM & 1.40449 & 6 & 0.023484 & 2.0007\\ \hline \hline
 1.5 & Modified & 1.36523 & 17 & 0.020378 & 1.10212\\
\cline{2-2} \cline{3-3} \cline{4-4} \cline{5-5} \cline{6-6} &NM & 1.36523 & 5 & 0.016596 & 2.0004\\ \hline \hline
 1.5 & Modified & 0.25753 & 21 & 0.025920 & 0.93483\\
\cline{2-2} \cline{3-3} \cline{4-4} \cline{5-5} \cline{6-6} &NM & 0.25753 & 6 & 0.028125 & 2.0005\\ \hline \hline
\end{tabular}   
    \caption{Comparison of computing time and rate of convergence}
    \label{tab:comparison}
\end{table}
\section*{Conclusion}
In this article, we analyze a modified Newton method for solving non-linear equations, designed to reduce computational cost by minimizing the number of derivative evaluations. We presented a theoretical convergence analysis establishing that the method generally exhibits linear convergence but can achieve quadratic convergence under certain conditions-particularly with suitable initial guesses and sufficient smoothness of the function. These findings were supported by numerical examples.

Our computational experiments demonstrated that the modified method is often more efficient than the classical Newton method in terms of running time, even when it requires more iterations. 
This efficiency arises from the fact that the derivative is evaluated only once, making the method especially appealing for problems where derivative computation is expensive or analytically intractable.

Overall, the modified Newton method offers a practical and efficient alternative to the classical approach. It provides robust performance across a variety of nonlinear problems. Future research may focus on extending this approach to systems of nonlinear equations, developing adaptive strategies for selectively updating the derivative, and applying the method to real-world problems in applied mathematics, physics, and engineering domain where computational efficiency is critical.

\end{document}